\documentclass{amsart}
\usepackage[utf8]{inputenc}
\usepackage[T1]{fontenc}
\usepackage{amsmath}
\usepackage{amsfonts}
\usepackage{amssymb}
\usepackage{graphicx}
\usepackage{amsthm}

\theoremstyle{plain}
\newtheorem{lemma}{Lemma}[section]

\newtheorem{example}[lemma]{Example}
\newtheorem{remark}[lemma]{Remark}

\newtheorem{theorem}[lemma]{Theorem}
\newtheorem{corollary}[lemma]{Corollary}
\newtheorem{notation}[lemma]{Notation}
\newtheorem{definition}[lemma]{Definition}
\def\ov{\overline}
\def\eps{\varepsilon}
\def\R{\mathbb R}

\begin{document}

\title{Intrinsic character of Stokes matrices }

\author[J.F. Gagnon, C. Rousseau]{Jean-Fran\c{c}ois Gagnon and Christiane Rousseau}
\address{Jean-Fran\c{c}ois Gagnon, D\'epartement de math\'ematiques, Coll\`ege Montmorency, 475, boulevard de l'Avenir, Laval (Qc), H7N 5H9, Canada; Christiane Rousseau, D\'epartement de
math\'ematiques et de statistique\\Universit\'e de Montr\'eal\\C.P. 6128,
Succursale Centre-ville, Montr\'eal (Qc), H3C 3J7, Canada.}
\email{jean-francois.gagnon@cmontmorency.qc.ca, rousseac@dms.umontreal.ca}

\thanks{This work was supported by NSERC in Canada.}

\keywords{Stokes phenomenon, irregular singularity, divergent series, Stokes matrices}
\date{\today}

\maketitle

\begin{abstract}
Two germs of linear analytic differential systems $x^{k+1}Y^\prime=A(x)Y$ with a non resonant irregular singularity are analytically equivalent if and only if they have the same eigenvalues and equivalent collections of Stokes matrices. The Stokes matrices are the transition matrices between sectors on which the system is analytically equivalent to its formal normal form. Each sector contains exactly one separating ray for each pair of eigenvalues. A rotation in $S$ allows supposing that $\mathbb R^+$ lies in the intersection of two sectors. Reordering of the coordinates of $Y$ allows ordering the real parts of the eigenvalues, thus yielding triangular Stokes matrices. However, the choice of the rotation in $x$ is not canonical. In this paper we establish how the collection of Stokes matrices depends on this rotation, and hence on a chosen order of the projection of the eigenvalues on a line through the origin.
\end{abstract}

\section{Introduction}

Consider a germ of linear analytic differential system
\begin{equation}
\label{equation}
x^{k+1}Y^\prime=A(x)Y,\;\;\;x\in\mathbb{C},\;Y\in\mathbb{C}^n,
\end{equation}
with a non resonant irregular singularity of Poincar\'e rank $k$ at $0$. Then $A(x)$ is a matrix of germs of holomorphic functions at the origin and  the eigenvalues of $A(0)$ are distinct. Without loss of generality we can suppose that $A(0)$ is diagonal. There exists a  unique formal normalizing series tangent to the identity $Y=\hat{H}(x)Z= (\mathrm{id} + O(x))Z$ bringing \eqref{equation} to the diagonal normal form
\begin{equation}
\label{normal_form}
x^{k+1} Z^\prime= (D_0+ D_1x+ \dots+D_kx^k)Z,
\end{equation}
with $D_i$ diagonal and $D_0=A(0)$. The normal form has a canonical diagonal fundamental matrix solution that we call $W(x)$. 
However, generically, the normalizing series $\hat{H}$ is divergent.

Nevertheless, there exists $2k$ sectors $S_j$ of opening greater than $\frac{\pi}{k}$ (see Figure~\ref{sectors}) on which there exist unique normalizing holomorphic functions $H_j(x)$ that are asymptotic to $\hat H(x)$ on $S_j$. This defines a fundamental matrix solution  $W_j(x)= H_j(x)W(x)$ of \eqref{equation} over each $S_j$.
\begin{figure}\begin{center}
\includegraphics[width=6cm]{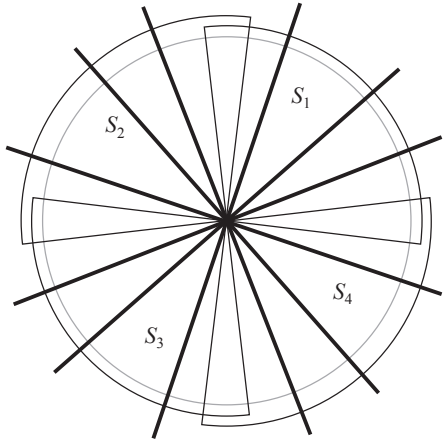}\caption{Four sectors when $k=2$. The bold lines are the separating rays.} \label{sectors}\end{center}\end{figure}
In the abundant literature on the subject (see for instance \cite{S}, \cite{IY} and \cite{R2}), it is often assumed that the eigenvalues of $A(0)$ satisfy the following inequality, a hypothesis that can be realized by means of a rotation in $x$ and a permutation of the coordinates in $Y$.
\begin{equation}
\label{hypothesis}
\mathfrak{R}(\lambda_1)>\mathfrak{R}(\lambda_2)>\dots>\mathfrak{R}(\lambda_n).
\end{equation}
Under this hypothesis, the columns $\{w_{1,j}, \dots, w_{n,j}\}$ of each $W_j$, which form a basis of the solution space,  are ordered with respect to flatness:
\begin{equation}\begin{cases}
w_{1,j'} \prec \dots \prec w_{n,j'}, & \mathrm{on} \: S_{2j}\cap S_{2j+1},\quad \mathrm{for} \:j'=2j,2j+1,\\
w_{1,j'} \succ \dots \succ w_{n,j'}, & \mathrm{on} \: S_{2j-1}\cap S_{2j},\quad \mathrm{for} \:j'=2j-1,2j,\end{cases} \label{order_flatness} \end{equation}
where indices are $\text{mod} 
\: 2k$. This comes from the fact that \begin{equation} w_{\ell,j}(x)= \exp\left(-\frac{\lambda_\ell}{kx^k}\right) v_{\ell,j}(x)\label{asympt_expansion}\end{equation} for some vector function $v_{\ell,j}(x)=O(1)$ on $S_j$. On the intersection $S_j\cap S_{j+1}$, the bases represented by $W_j$ and $W_{j+1}$ coexist and are linked by a matrix $C(j)\in GL(n,\mathbb C)$:
\begin{equation}
W_{j+1}=W_jC(j),\end{equation}
where indices are modulo $2k$.

The $C(j)$ are called \emph{Stokes matrices}. Generically, more precisely when $\hat{H}$ is divergent, some of the $C(j)$ are not diagonal. This is called the \emph{Stokes phenomenon}: the Stokes matrices measure the obstruction to have \eqref{equation} analytically equivalent to its normal form. Because of \eqref{order_flatness} we have that $C(j)$ is upper (resp. lower triangular) for $j$ even (resp. odd). 

When $x$ sweeps  a sector $S_j$, with increasing argument, the relative order of flatness of the $w_{\ell,j}$ changes. The change occurs on the \emph{separating rays}, which are the half-lines  determined by the condition $\mathfrak{R}\left(\frac{\lambda_\ell-\lambda_{\ell'}}{x^k}\right)=0$. Hence, there are $2k$ separating rays for each pair of eigenvalues $(\lambda_\ell,\lambda_{\ell'})$, one in each sector $S_j$. Of course, several pairs of eigenvalues can have the same separating rays. 
\\

In the presentation above, the choice of a rotation in $x$ corresponds to choosing a \emph{starting ray} $e^{i\theta} \mathbb R^+$ in $x$-space so that all eigenvalues have distinct projections on $e^{ik\theta}\mathbb R$. We say that the direction $e^{ik\theta}\R^+$ is \emph{non critical} in the eigenvalue space. When this direction is $\R^+$, the sector $S_1$ is chosen so that all separating rays inside $S_1$ have positive arguments. Hence, when starting on $\R^+$, we cross them when we turn in the positive direction.  This choice is non canonical. We could have chosen another non critical direction.

The starting ray $e^{i\theta} \mathbb R^+$ in $x$-space yields an order of the projections of the eigenvalues on the line $e^{ik\theta}\mathbb R$ oriented in the direction of $e^{ik\theta}$,  (which will induce an order of flatness  on the $\exp\left(-\frac{\lambda_\ell}{kx^k}\right)$ on $e^{i\theta}\R^+$), and the half-line $e^{i\theta}\mathbb R^+$ is called a \emph{non separating ray}. The coordinates of $Y$ are then permuted so that the order of flatness is in decreasing order. As mentioned above, there is no canonical way of choosing $\theta$. The \emph{separating rays} are the directions $e^{i\phi}$, for which $\mathfrak{R}\left(\left(\lambda_\ell-\lambda_{\ell'}\right)e^{-k\phi}\right)=0$. They divide the set of non separating rays $e^{i\theta}\mathbb R^+$ into a finite number of connected components. When constructing a sector $S_j$ containing a starting ray $e^{i\theta}\R^+$, we enlarge it with increasing argument, until it contains exactly one separating ray for each pair of eigenvalues. The other sectors are built in the same way with starting rays $e^{i\left(\theta+\frac{\ell \pi}{k}\right)}\R^+$, $\ell=1,\dots, 2k-1$.  We describe how the collection of Stokes matrices associated to a starting ray $e^{i\theta}\mathbb R^+$ changes when we cross a separating ray. The change is nontrivial. On a separating ray $e^{i\phi}\R^+$, some blocks of consecutive eigenvalues have identical projections on the critical ray $e^{ik\phi}\R^+$. The orders of projections of eigenvalues of each block are opposite on the two sides of the critical  ray. 
If we are considering an upper (resp. lower triangular Stokes matrix) $C$, then the new upper (resp. lower) Stokes matrix constructed after crossing the separating ray  is 
obtained as $P^{-1}U^{-1}CVP$, where $U$ and $V$ are block diagonal matrices with blocks of the lower (resp. upper) adjacent Stokes matrices on both sides of $C$, for the eigenvalues that have changed order, and identity blocks elsewhere, and where $P$ is a permutation matrix representing the new order of eigenvalues. The precise statement will be given below after we have introduced the necessary notations. We illustrate the theorem on an example in $\mathbb C^3$ for $k=1$.

\section{The main theorem}

Before stating the theorem, let us introduce some notation adequate to our purpose. Indeed, we will need to change the order of the eigenvalues in all subsets of eigenvalues that project on a unique point on a critical ray $e^{i\psi} \mathbb{R}^+$.
\begin{notation} 
\begin{enumerate}
\item $I_\ell$ and $J_\ell$ represent respectively the $\ell\times \ell$ identity  matrix and the matrix with $1$ on the anti-diagonal and $0$ elsewhere. 
\item Let $n= s_1+ r_2+ s_3+ r_4+ \dots+ s_{2m-1}+ r_{2m}+s_{2m+1}$ with $s_{2i+1}\in \mathbb N$ and  $r_{2i}\geq 2$.
We let 
\begin{equation}P_{s_1,r_2, \dots, s_{2m+1}}=\mathrm{diag}(I_{s_1}, J_{r_2}, \dots, J_{r_{2m}}, I_{s_{2m+1}}).\label{def_P} \end{equation} Calling $\overline{m}$ the ordered generalized partition of $n$ given by \begin{equation}\overline{m}= (s_1,r_2, \dots, s_{2m+1}),\label{def:m}\end{equation} we will also use the shortened notation $P_{\overline{m}}$.  (Note that $P_{\overline{m}}=P_{\overline{m}}^{-1}$.)
\end{enumerate}
\end{notation}

\begin{definition} \begin{enumerate}
\item Let $f$ and $g$ be meromorphic functions on a neighbourhood of $0$ and $R$ be an open ray (i.e. $R=e^{i\theta}\mathbb R^+$). We say that $f$ is \emph{flatter} than $g$ on $R$, and write $f\prec g$, or $g\succ f$, if $f/g\rightarrow0$ as $x\rightarrow0$ along $R$. If $S$ is a sector, then $f\prec g$ on $S$ if it is the case for every ray in $S$. 
\item Similarly, let $w(x)= (w^1(x), \dots, w^n(x))$ and $\ov{w}(x)=(\ov{w}^1(x), \dots, \ov{w}^n(x))$  be two vectors, the coordinates of which are holomorphic on a sector $S$. We say that $w$ is flatter than $\ov{w}$ on $R$ (resp. $S$), and write $w\prec \ov{w}$ if, for all $\ell$, $w^\ell\prec \ov{w}^\ell$ on $R$ (resp. $S$).\end{enumerate} \end{definition}

\begin{definition} In a system \eqref{equation} with $A(0)= \mathrm{diag}(\Lambda)$, where $\Lambda=(\lambda_1,...,\lambda_n)$, the \emph{separation rays} are given by the solutions to
\begin{eqnarray}
\mathfrak{R}\left(\frac{\lambda_p-\lambda_q}{x^{k-1}}\right)=0.
\end{eqnarray}\end{definition}

\begin{definition} A ray $e^{i\psi}\R^+$ is a \emph{critical ray} if several eigenvalues have equal projections on the line $e^{i\psi}\R$.\end{definition}

\begin{remark} If  $e^{i\psi}\R^+$  is a critical ray, then
$e^{i\left(\frac{\psi+j\pi}{k}\right)}\R^+$, $j=0, \dots, 2k-1$ are its associated separating rays. 
The critical rays are in the complex plane of eigenvalues, while the separating rays are in the $x$-plane. In particular, a critical ray is a separating ray when $k=1$.\end{remark}

Along the separating rays, the order of the solutions given by their respective order of flatness changes, and this happens nowhere else. This means that in a sector containing none of these rays, the ordering of solutions by their flatness is constant. Also, since $n$ is finite, is it possible to enumerate the separating rays as  $R_1,R_2,...,R_{2ku}$, where $R_j$ has argument $\phi_j\in[0,2\pi)$   and the $\phi_j$ are increasing. Note that hypothesis \eqref{hypothesis} implies that $\mathbb{R}^+$ is not a separating ray. Therefore it is used as a starting point to build the Stokes matrices.

\begin{definition} Let $R=e^{i\phi}\R^+$ be a ray  and $\mathrm{pr} (\lambda_j)$ be the signed length of the projection of the eigenvalue $\lambda_j$ on its associated  ray  $\ov{R}=e^{ik\phi}\R^+$ (ie. $\mathrm{pr}(\lambda_j)=\mathfrak{R}(\lambda_je^{-ik\phi})$.) We say that the \emph{order of eigenvalues on the ray $R$} is given by $\overline{m}= (s_1,r_2, \dots, s_{2m+1})$ if the subsets of eigenvalues corresponding to indices $r_j$ have equal projections, more precisely: 
$$\begin{cases} \mathrm{pr}(\lambda_j)\geq 
\mathrm{pr}(\lambda_{j+1}), &\text{for all} \quad j,\\
 \mathrm{pr}(\lambda_j)= 
\mathrm{pr}(\lambda_{j+1})& \text{if and only if } \quad j\in \sum_{i=1}^\ell r_{2(i-1)} +\sum_{i=1}^{\ell}s_{2i-1}+ [1, r_{2\ell}-1],\end{cases}$$ (see Figure~\ref{projection}).
Note that when $R$ is not a separating ray, then $m=0$ and $s_1=n$. Also, the order of eigenvalues corresponds to the respective order of flatness of the $\exp\left(-\frac{\lambda_j}{x^k}\right)$ on $R$.\end{definition}
 \begin{figure}\begin{center}
\includegraphics[width=7cm]{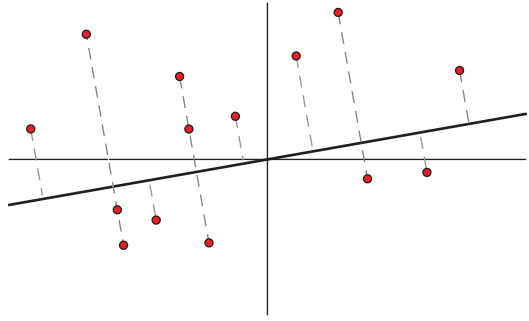}\caption{The projections of the eigenvalues on a critical ray. Here, $\ov{m}=(2,2,2,3,1,3,1)$.} \label{projection}\end{center}\end{figure}

\subsection{Statement of the theorem} \begin{theorem}\label{thm} We consider a system \eqref{equation} satisfying hypothesis \eqref{hypothesis}, and its Stokes matrices $C(j)$, $j=1, \dots, 2k,$ corresponding to the choice of $\mathbb{R}^+$ as starting ray. Let $\phi_1<\dots<\phi_{2ku}$ be the angles of the separating rays. Let $e^{i\theta}\R^+$, with $\theta\in (\phi_1,\phi_2) $, be a new starting ray such that the new sectors can be chosen as $\widetilde{S}_j=e^{i\theta} S_j$ (see Figure~\ref{turned_sectors}), and let $\widetilde{C}(j)$ be new Stokes matrices associated to the collection of sectors $\widetilde{S}_j$.
We suppose that the order of eigenvalues on the separating ray $R_1=e^{i\phi_1}\mathbb R^+$ is given by $\overline{m}= (s_1,r_2, \dots, s_{2m+1})$. Using a block notation $C(j)_{i,\ell}$ with $i,\ell\in \{1, \dots, 2m+1\}$, the size of the blocks corresponding to the partition of $n$ given by $\overline{m}$, then the $\widetilde{C}(j)$ can be chosen as
\begin{align} \begin{split} \widetilde{C}(j)&=P_{\overline{m}}\cdot \mathrm{diag}\left(I_{s_1}, C(j)_{2,2}^{-1}, I_{s_3}, \dots , C(j)_{2m,2m}^{-1}, I_{s_{2m+1}}\right)  \\
&\qquad\cdot C(j)\cdot\mathrm{diag}\left(I_{s_1}, C(j+1)_{2,2}, I_{s_3}, \dots , C(j+1)_{2m,2m}, I_{s_{2m+1}}\right)\cdot P_{\overline{m}}.\end{split} \end{align}
 \end{theorem}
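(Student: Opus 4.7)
The strategy is to compare, in the original coordinate ordering of $Y$, the new fundamental solutions $\widetilde{W}_j$ on $\widetilde{S}_j$ with the old $W_j$ on $S_j$, and to apply the coordinate permutation $P_{\overline{m}}$ only at the very end. Since any two fundamental matrix solutions of \eqref{equation} differ by an invertible constant matrix, we may write $\widetilde{W}_j = W_j A_j$ for a unique $A_j \in GL(n,\mathbb{C})$. The Stokes matrices in the old coordinates are then $C'(j) = \widetilde{W}_j^{-1}\widetilde{W}_{j+1} = A_j^{-1} C(j)\, A_{j+1}$, so the theorem reduces to establishing
\[
A_j \;=\; \mathrm{diag}\bigl(I_{s_1},\, C(j)_{2,2},\, I_{s_3},\, C(j)_{4,4},\, \ldots,\, C(j)_{2m,2m},\, I_{s_{2m+1}}\bigr)
\]
and then conjugating by $P_{\overline{m}}$ to record the relabeling of the coordinates of $Y$ on the new starting ray.

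To identify $A_j$, I would invoke the characterization of $\widetilde{H}_j := \widetilde{W}_j W^{-1}$ as the unique Gevrey-$\tfrac{1}{k}$ normalization on $\widetilde{S}_j$ asymptotic to $\hat{H}$. On $\widetilde{S}_j \cap S_j$ one has $\widetilde{H}_j = H_j (W A_j W^{-1})$, so $W A_j W^{-1}$ must be Gevrey-close to the identity there; since its $(\ell, m)$ entry is $(A_j)_{\ell,m}\exp((\lambda_m-\lambda_\ell)/(kx^k) + \ldots)$, this translates to a condition on the relative flatness of $w_{m,j}$ versus $w_{\ell,j}$ on the overlap. A parallel analysis on $\widetilde{S}_j \cap S_{j+1}$, using $\widetilde{W}_j = W_{j+1}(C(j)^{-1}A_j)$, imposes a symmetric constraint on $C(j)^{-1} A_j$. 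Comparing the two flatness orderings: for a pair $(\lambda_\ell,\lambda_m)$ not contained in a common $r_{2i}$-block of $\overline{m}$, the pair's separating ray in question lies outside both overlaps so the two orderings coincide, forcing $(A_j)_{\ell,m}=0$; for a pair inside the same $r_{2i}$-block, $\widetilde{S}_j$ sits on the opposite side of their common separating ray from $S_j$, the orderings disagree, and nontrivial entries become admissible.

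The remaining step is to pin down the $r_{2i}$-blocks of $A_j$ as exactly $C(j)_{2i,2i}$. All eigenvalues within a single $r_{2i}$-block project to the same point on the critical ray associated to $R_1$, so the intra-block Stokes structure decouples from the off-diagonal blocks of $C(j)$; the matching of $\widetilde{W}_j$ with $W_{j+1}$ via $C(j)^{-1}A_j$ on $\widetilde{S}_j \cap S_{j+1}$ then forces the block of $A_j$ to be the unique triangular matrix effecting that transition within the block, which is precisely $C(j)_{2i,2i}$. Once $A_j$ is known, the displayed formula for $\widetilde{C}(j)$ follows from direct matrix multiplication together with the outer conjugation by $P_{\overline{m}}$.

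The main obstacle is this final pin-down of the $r_{2i}$-blocks: one must verify that the Gevrey-flat conditions on the two narrow overlaps really do not mix intra-block entries with entries from the off-diagonal blocks of $C(j)$, even when $R_1$ is a common separating ray for several pairs of eigenvalues so that $\overline{m}$ has multiple $r_{2i}$-blocks. Once this decoupling is established, the remainder of the proof is bookkeeping.
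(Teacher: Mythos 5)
Your overall strategy is the same as the paper's: you write the new sectorial fundamental solution as $\widetilde{W}_j = W_j A_j$ and use uniqueness (up to right diagonal multiplication) of the solution satisfying the correct flatness ordering on the thin intersections to pin down $A_j$; the paper instead directly posits the candidate $\widetilde{W}_j = W_j\,\mathrm{diag}(I_{s_1}, C(j)_{2,2}, I_{s_3})\,P_{\overline m}$ and verifies that its columns have the required flatness order, invoking the same uniqueness principle. Once $A_j$ is known the formula for $\widetilde C(j)$ is indeed the same bookkeeping in both treatments, so the decomposition and the role of $P_{\overline m}$ are identical.

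There are, however, two places where your argument is not yet a proof, and they are precisely the places where the paper works hardest.

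First, the identification of the $r_{2i}$-block of $A_j$ as $C(j)_{2i,2i}$ is asserted rather than shown. You say the matching on $\widetilde S_j\cap S_{j+1}$ ``forces the block of $A_j$ to be the unique triangular matrix effecting that transition,'' but the flatness constraint on that overlap only tells you that certain off-diagonal entries of $C(j)^{-1}A_j$ must vanish; it does not by itself say that the surviving block is \emph{diagonal} rather than triangular, and one must also check that no cross terms involving indices outside the block interfere. The paper settles this concretely (for $m=1$, $j=1$): it forms the truncated columns $\overline w_\ell = \sum_{i\in[s_1+1,s_1+r_2]} w_i\,c_{i\ell}$, observes that $\widehat w_\ell - \overline w_\ell = \sum_{i>s_1+r_2} b_{i\ell} w_i$ because $C(1)$ is lower triangular, shows that these correction terms are strictly flatter than $\overline w_\ell$ on $\widetilde S_1\cap\widetilde S_2$ (one has crossed the relevant separating rays), and transfers the known flatness ordering of the $\widehat w_\ell$ (columns of $W_2$, after $e^{i\pi/k}R_1$) to the $\overline w_\ell$. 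This computation is the mathematical content of the step you describe as forcing, and it is not automatic.

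Second, you flag the decoupling of the several $r_{2i}$-blocks when $m>1$ as ``the main obstacle,'' which is honest but leaves the theorem unproved in the general case. The paper does not attempt to argue the decoupling directly in the Gevrey-flatness picture; it reduces to $m=1$ by a perturbation argument: rotate the eigenvalues of one block by $e^{i\eps}$ so that the coincident separating rays split into distinct ones $R_1$ and $R_1'$, apply the $m=1$ case of the theorem twice (the two crossings commute and the result is independent of the sign of $\eps$), and then let $\eps\to 0$ using analytic dependence of the Stokes matrices on the eigenvalues. If you want to keep your direct approach, you would need to verify carefully, for each pair of indices, which of the two overlaps $\widetilde S_j\cap S_j$ and $\widetilde S_j\cap S_{j+1}$ contains which separating rays, and confirm that the resulting vanishing constraints on $A_j$ and on $C(j)^{-1}A_j$ really do isolate the $r_{2i}$-blocks from one another and from the off-diagonal blocks of $C(j)$. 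As written, this is where your proof has a genuine gap.
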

 \begin{figure}\begin{center}
\includegraphics[width=6cm]{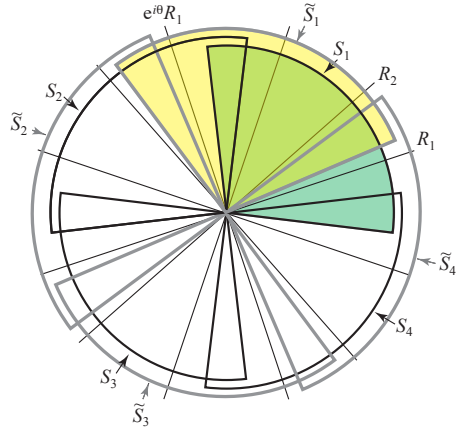}\caption{The sectors $S_i$ (with black boundary) and $\widetilde{S}_i$ (with grey boundary) when $k=2$. The other lines are the separating rays.} \label{turned_sectors}\end{center}\end{figure}

\begin{example}[Explicit computation in the case $k=1$ and $n=3$]

Consider the case where $R_1,R_2,R_3$ are distinct, with $R_j=e^{i\phi_j}\mathbb R^+$.  On $R_1$, let us suppose that the projections of $\lambda_2$ and $\lambda_3$ coincide. On $R_2$, the projections of $\lambda_1$ and $\lambda_3$ coincide, and on $R_3$ the projections of $\lambda_1$ and $\lambda_2$ coincide. Take starting rays $e^{i\theta_\ell}\mathbb R^+$, $\ell\in\{0,1,2,3\}$ such that
\begin{eqnarray}
0=\theta_0<\phi_1<\theta_1<\phi_2<\theta_2<\phi_3<\theta_3=\pi.
\end{eqnarray}
Let us write $C(1)=C^+= (c_{ij}^+)$ and $C(2)= C^-= (c_{ij}^-)$. Then choosing $R=e^{i\theta_\ell}\mathbb{R}^+$ as the starting ray, one gets a pair of Stokes matrices

{\small \begin{equation}
\begin{array}{r|cc}
R & C(1)=C^+ & C(2)=C^-\\
\hline
&&\\
\mathbb{R}^+ & \begin{pmatrix}
c_{11}^+& c_{12}^+ & c_{13}^+ \\
0 & c_{22}^+ & c_{23}^+ \\
0 & 0 & c_{33}^+
\end{pmatrix} & \begin{pmatrix}
c_{11}^- & 0 & 0 \\
c_{21}^- & c_{22}^- & 0 \\
c_{31}^- & c_{32}^- & c_{33}^- \\
\end{pmatrix} \\
&&\\
e^{i\theta_1}\mathbb{R}^+ & \begin{pmatrix}
c_{11}^+ & c_{13}^+c_{33}^- & c_{22}^-c_{12}^++c_{32}^-c_{13} ^+\\
0 & c_{33}^- & c_{32}^- \\
0 & 0 & c_{22}^-
\end{pmatrix} & \begin{pmatrix}
c_{11}^- & 0 & 0 \\
\frac{c_{22}^-c_{31}^--c_{32}^-c_{21}^-}{c_{22}^-c_{33}^-} & c_{33}^+ & 0 \\
\frac{c_{21}^-}{c_{22}^-} & c_{23}^+ & c_{22}^+
\end{pmatrix} \\
&&\\
e^{i\theta_2}\mathbb{R}^+ & \begin{pmatrix}
c_{33}^+ & \frac{c_{22}^-c_{31}^--c^-_{21}c^-_{32}}{c^-_{22}c^-_{33}} & \frac{c^-_{32}}{c^-_{33}} \\
0 & c_{11}^- & \frac{c^-_{22}c^+_{12}}{c^+_{11}} \\
0 & 0 & c_{22}^-
\end{pmatrix} & \begin{pmatrix}
c^-_{33} & 0 & 0 \\
c^-_{33}c^+_{13} & c^+_{11} & 0 \\
c^-_{33}\left(\frac{c^-_{21}c^+_{13}}{c^-_{22}}+c^+_{23}\right) & \frac{c^-_{21}c^+_{11}}{c^-_{22}} & c^+_{22}
\end{pmatrix} \\
&&\\
-\mathbb{R}^+ & \begin{pmatrix}
c_{33}^+ & \frac{c^-_{32}c^+_{22}}{c^-_{33}} & \frac{c^-_{31}c^+_{11}}{c^-_{33}} \\
0 & c_{22}^+ & \frac{c^-_{21}c^+_{11}}{c^-_{22}} \\
0 & 0 & c_{11}^+
\end{pmatrix} & \begin{pmatrix}
c^-_{33} & 0 & 0 \\
\frac{c^-_{33}c^+_{23}}{c^+_{22}} & c^-_{22} & 0 \\
\frac{c^-_{33}c^+_{13}}{c^+_{11}} & \frac{c^-_{22}c^+_{12}}{c^+_{11}} & c^-_{11}
\end{pmatrix}
\end{array}
\end{equation}}

Let us call the Stokes matrices $\overline{C}^\pm$ for $\theta_3=\pi$. One would expect that $\overline{C}^\mp$ would be equal to  $P_{0,3,0}C^\pm P_{0,3,0}$. This is not the case, but the difference comes from the fact that the matrices are only determined up to diagonal matrices. Indeed, 

\begin{align}\begin{split}
D\overline{C}^+D_*^{-1}&=\begin{pmatrix}
c^-_{33} & 0 & 0 \\
0 & c^-_{22} & 0 \\
0 & 0 & c^-_{11}
\end{pmatrix} \begin{pmatrix}
c_{33}^+ & \frac{c^-_{32}c^+_{22}}{c^-_{33}} & \frac{c^-_{31}c^+_{11}}{c^-_{33}} \\
0 & c_{22}^+ & \frac{c^-_{21}c^+_{11}}{c^-_{22}} \\
0 & 0 & c_{11}^+
\end{pmatrix} \begin{pmatrix}
c^+_{33} & 0 & 0 \\
0 & c^+_{22} & 0 \\
0 & 0 & c^+_{11}
\end{pmatrix}^{-1}\\
&=\begin{pmatrix}
c^-_{33} & c^-_{32}  & c^-_{31} \\
0& c^-_{22} & c^-_{21}\\
0 &0  & c^-_{11}
\end{pmatrix}=P_{0,3,0}\,C^-\,P_{0,3,0}.
\end{split} \end{align}
Similarly we can show that  $D_*\overline{C}^-D^{-1}= P_{0,3,0}
\,C^+\,P_{0,3,0}$.
\end{example}

\subsection{Proof of the theorem} 

The first step is the reduction to the case $m=1$ (see \eqref{def:m} for the definition of $m$). This comes from the fact that the phenomena at each block of eigenvalues having equal projections on the critical ray $\ov{R}_1= e^{ik\phi_1}\R^+$ are independent. Suppose for instance that the theorem is proved when $m=1$, and consider the case  $m=2$. 
Consider a perturbation of the system in which we multiply by $e^{i\eps}$, 
for some small $\eps$, the eigenvalues of the second block of eigenvalues which have equal projection on the critical ray $\ov{R}_1$. Then, when $\eps$ is real, small  and nonzero, we have two critical rays $\ov{R}_1$ et $\ov{R}_1'= e^{i\eps}R_1$,  and two separating rays $R_1$ and $R_1'= e^{i\frac{\eps}k}R_1$. 
For nonzero $\eps$, the passage from the starting ray $\mathbb R^+$  to the starting ray $e^{i\theta} \mathbb R^+$ is obtained by applying Theorem~\ref{thm} twice: when $\eps>0$, we first consider the change in the Stokes matrices when passing $R_1$ using Theorem~\ref{thm};  then we change $x\mapsto xe^{-i(\phi_1+\frac{\eps}2)}$ and pass $R_1'$ using Theorem~\ref{thm} a second time. When $\eps<0$, the passages are in the reverse order. The two passages commute and the final result is independent of the sign of $\eps$. Moreover, the construction of the Stokes matrices shows that they depend analytically on the eigenvalues. Then  the limit passage when $\eps=0$ is the composition of the passages for each block of eigenvalues. The same reasoning can be done for any $m\geq2$. 

Hence, from now on, we treat the case $m=1$, i.e. $n= s_1+r_2+s_3$ and the eigenvalues $\lambda_j$  with $j\in[s_1+1, s_1+r_2]$ have equal projections on the critical ray $\ov{R}_1=e^{ik\phi_1}\mathbb R^+$.

Let $$W(x)= \text{diag} (\omega_1(x), \dots , \omega_n(x))$$  be the diagonal fundamental matrix solution of the normal form of \eqref{equation} at 0. Hypothesis \eqref{hypothesis} implies that 
\begin{eqnarray}
\omega_1\prec \omega_2\prec ...\prec \omega_n
\end{eqnarray}
on $\mathbb{R}^+$ and everywhere on $S_1\cap S_{2k}$. As a matter of fact, this ordering is precisely why we take \eqref{hypothesis} as a hypothesis. This order is respected on every thin intersection $S_{2j}\cap S_{2j+1}$ and is completely reversed on $S_{2j+1}\cap S_{2j+2}$. A direct consequence is that the Stokes matrices are alternatively upper and lower triangular.\\
\\
This whole construction depends on the choice of $\mathbb{R}^+$ as our starting ray, but this choice is not canonical.

\subsubsection{New order on the eigenvalues}

We start by describing the changes induced by choosing $\widetilde{R}=e^{i\theta}\mathbb{R}^+$ instead of $\mathbb{R}^+$ as starting ray commanding the  order of the eigenvalues. 

\begin{lemma} Let $R_1=e^{i\phi_1}\mathbb R^+$ be the first separating ray by order of increasing argument. Let us suppose that on the associated critical ray $\ov{R}_1=e^{ik\phi_1}\mathbb R^+$  precisely the following signed projections are equal \begin{equation}\mathfrak{R}(e^{-ik\phi_1} \lambda_{j_1})= \dots =  \mathfrak{R}(e^{-ik\phi_1} \lambda_{j_{r_2}}),\label{equal_signed_proj}\end{equation} and the others are distinct. If $j_1=s_1+1$, then $j_2=s_1+2, \dots,j_{r_2}=s_1+r_2$. Moreover, for $\phi_1<\theta<\phi_2$, the new order of the eigenvalues on $e^{i\theta}\R^+$ is obtained by completely reversing the order of the eigenvalues at positions $s_1+1$ to $s_1+r_2$ and leaving the others as they were. \end{lemma}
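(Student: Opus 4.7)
The plan is to treat the signed projection $\mathrm{pr}_\theta(\lambda_j) = \mathfrak{R}(e^{-ik\theta}\lambda_j)$ as a continuous function of $\theta$ and track how the ordering of the $n$ values changes as $\theta$ sweeps from $0$ through $\phi_1$. Because $R_1$ is the first separating ray, no pair of projections can coincide for $\theta \in (0,\phi_1)$. Together with hypothesis \eqref{hypothesis}, this forces the strict chain $\mathrm{pr}_\theta(\lambda_1) > \cdots > \mathrm{pr}_\theta(\lambda_n)$ to persist on all of $[0,\phi_1)$ and the corresponding weak inequalities to survive at $\theta = \phi_1$.

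To establish consecutiveness, I would suppose for contradiction that $j_1 = s_1+1$ but that some index $p$ with $j_1 < p < j_{r_2}$ fails to lie in the collision set \eqref{equal_signed_proj}. Hypothesis \eqref{hypothesis} gives $\mathrm{pr}_0(\lambda_{j_1}) > \mathrm{pr}_0(\lambda_p) > \mathrm{pr}_0(\lambda_{j_{r_2}})$, while the outer projections collide at $\theta = \phi_1$. The weak monotonicity preserved on $[0,\phi_1]$ then sandwiches $\mathrm{pr}_{\phi_1}(\lambda_p)$ between two equal quantities, forcing $\mathrm{pr}_{\phi_1}(\lambda_p) = \mathrm{pr}_{\phi_1}(\lambda_{j_1})$. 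This contradicts the assumption that exactly the $\lambda_{j_i}$ collide on $\overline{R}_1$, so $\{j_1,\dots,j_{r_2}\} = \{s_1+1,\dots,s_1+r_2\}$, and \eqref{hypothesis} lists them in this order.

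For the reversal inside the block, fix any pair $j < \ell$ in $\{s_1+1,\dots,s_1+r_2\}$. Collision at $\phi_1$ makes $e^{-ik\phi_1}(\lambda_j - \lambda_\ell)$ purely imaginary and nonzero, so
\begin{equation*}
\frac{d}{d\theta}\,\mathfrak{R}\bigl(e^{-ik\theta}(\lambda_j - \lambda_\ell)\bigr)\Big|_{\theta=\phi_1} = \mathfrak{R}\bigl(-ik\,e^{-ik\phi_1}(\lambda_j - \lambda_\ell)\bigr)
\end{equation*}
is a nonzero real number. Combined with the fact that $\mathrm{pr}_\theta(\lambda_j) - \mathrm{pr}_\theta(\lambda_\ell)$ is strictly positive on $[0,\phi_1)$ and vanishes at $\phi_1$, transversality forces it to become strictly negative just past $\phi_1$, so the pair swaps. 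Applied to every pair in the block, this reverses the block's internal order. For any pair involving at least one index outside the block, the two projections differ strictly at $\phi_1$, so continuity preserves their relative order in a neighborhood of $\phi_1$, and the absence of separating rays in $(\phi_1,\phi_2)$ propagates this throughout the interval. The only mildly subtle point is the sandwich argument yielding consecutiveness; the rest is immediate from continuity and a first-derivative transversality check at $\phi_1$.
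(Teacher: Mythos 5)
Your proposal is correct and follows essentially the same route as the paper: continuity of the projections together with the absence of collisions on $(0,\phi_1)$ gives consecutiveness of the colliding block, and the nonvanishing of $\frac{d}{d\theta}\mathfrak{R}\bigl(e^{-ik\theta}(\lambda_j-\lambda_\ell)\bigr)$ at $\phi_1$ (since $e^{-ik\phi_1}(\lambda_j-\lambda_\ell)$ is purely imaginary and nonzero) forces each in-block pair to swap, yielding the full reversal. Your sandwich argument makes explicit the consecutiveness step that the paper leaves implicit under the phrase ``by continuity,'' and your pairwise transversality statement is equivalent to the paper's observation that the $\mathfrak{I}(e^{-ik\phi_1}\lambda_{j_m})$ are distinct and hence totally ordered, so these are only cosmetic differences.
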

\begin{proof} 
The signed projections on $e^{ik\theta}\mathbb{R}^+$ depend continuously on $\theta$, implying that if $j_1=s_1+1$, then $j_2=s_1+2, \dots,j_{r_2}=s_1+r_2$. Let us call $f(\theta,\lambda_j) = \mathfrak{R}(e^{-ik\theta}\lambda_j)$. Then $\frac{\partial}{\partial \theta} f(\theta,\lambda_j)= k\mathfrak{I} (e^{-ik\theta}\lambda_j).$ Since the $\lambda_\ell$ are distinct, and because \eqref{equal_signed_proj} is satisfied, it follows that the $\mathfrak{I} (e^{-ik\phi_1}\lambda_{j_m})$ are distinct. Since $\mathfrak{R}(e^{-ik\theta} \lambda_{j_1})<\dots < \mathfrak{R}(e^{-ik\theta} \lambda_{j_s})$ for $0\leq \theta<\phi_1$, then $\mathfrak{I} (e^{-ik\phi_1}\lambda_{j_1})> \dots >\mathfrak{I}(e^{-ik\phi_1}\lambda_{j_s})$, from which the conclusion follows. \end{proof} 

\begin{corollary} Let us suppose that on $R_1$, $m$ subsets of consecutive eigenvalues have equal order. Then for $
\phi_1<\theta<\phi_2$, the new order of the eigenvalues on $\widetilde{R}$ is obtained from the order in \eqref{hypothesis} by completely reversing the order in each group. \end{corollary}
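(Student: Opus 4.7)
The plan is to apply the lemma to each group of consecutive eigenvalues with equal projection on $\ov{R}_1$ independently, relying on the fact that the behavior at each such group is decoupled from the others.

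First, I would set up the notation. Write $\ov{m}=(s_1,r_2,s_3,r_4,\dots,s_{2m+1})$ for the ordered partition describing the order of eigenvalues on $R_1$, so that the $i$-th group of consecutive eigenvalues with equal projection on $\ov{R}_1$ consists of the indices
\begin{equation*}
G_i=\Bigl[\,\sum_{\ell=1}^{i}r_{2(\ell-1)}+\sum_{\ell=1}^{i}s_{2\ell-1}+1,\;\sum_{\ell=1}^{i}r_{2(\ell-1)}+\sum_{\ell=1}^{i}s_{2\ell-1}+r_{2i}\,\Bigr],
\end{equation*}
with the convention $r_0=0$. By definition of $\ov{m}$, two eigenvalues $\lambda_j,\lambda_{j'}$ have $\mathfrak{R}(e^{-ik\phi_1}\lambda_j)=\mathfrak{R}(e^{-ik\phi_1}\lambda_{j'})$ if and only if $j,j'$ lie in the same group $G_i$.

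Second, I would apply the lemma separately to each $G_i$. The lemma was stated for the first group $G_1$ starting at position $s_1+1$, but its proof is purely local: it only uses that the relevant eigenvalues have coincident signed projections on $\ov{R}_1$ and that all other pairs of projections remain distinct in a neighbourhood of $\phi_1$. These hypotheses are fulfilled for each $G_i$, so the same argument — comparing the $\theta$-derivative $k\mathfrak{I}(e^{-ik\theta}\lambda_j)$ of $f(\theta,\lambda_j)$ at $\theta=\phi_1$ — shows that the order of the eigenvalues inside $G_i$ is completely reversed when $\theta$ moves from just below $\phi_1$ to just above.

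Third, I would verify that no other swaps occur between $\phi_1$ and $\phi_2$. By the definition of separating rays, the functions $\theta\mapsto f(\theta,\lambda_j)-f(\theta,\lambda_{j'})$ vanish only at separating rays, and $(\phi_1,\phi_2)$ contains none. By continuity, for $\theta\in(\phi_1,\phi_2)$, any two eigenvalues belonging to distinct groups $G_i$, or one of which does not belong to any group, retain the same relative order as on $\R^+$. Combined with the reversal inside each $G_i$, this yields exactly the order described in the corollary.

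I do not expect a real obstacle here: the statement is a clean consequence of the lemma together with the locality of the argument and the continuity of the projections in $\theta$. The only point meriting attention is the bookkeeping of indices so that the reversals inside each $G_i$ are correctly identified with the blocks $r_{2i}$ of the partition $\ov{m}$, which is immediate from the definition of the order of eigenvalues on a ray.
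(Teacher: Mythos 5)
Your proposal is correct and follows essentially the same route the paper intends: the corollary is treated there as an immediate consequence of the lemma, and your expansion—applying the local derivative argument $\frac{\partial}{\partial\theta}f(\theta,\lambda_j)=k\,\mathfrak{I}(e^{-ik\theta}\lambda_j)$ block by block, then using the absence of separating rays in $(\phi_1,\phi_2)$ to rule out any further reordering—is precisely the reasoning the paper leaves implicit. The bookkeeping of the groups $G_i$ matches the index ranges in the paper's definition of the order of eigenvalues on a ray.
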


\subsubsection{New sectors $\widetilde{S}_j$}

Under  \eqref{hypothesis}, let $0\leq \phi_1< \dots <\phi_N<2\pi $ be the separating rays, and let $$\delta= \min\{\phi_2-\phi_1, \dots, \phi_N-\phi_{N-1},\phi_1,2\pi -\phi_N\}>0.$$ The sectors  $S_j$ can be chosen so that 
$$S_j=\left\{x\: ;\: |x|<r, \arg(x) \in\left( \frac{(j-1)\pi}{k}-\frac{\delta}4,\frac{j\pi}{k}+\frac{\delta}4\right)\right\}.$$ This definition allows simply defining the new sectors as \begin{equation}
\widetilde{S}_j=e^{i\left(\phi_1+\frac{\delta}{2}\right)}S_j.
\end{equation}

\subsubsection{New Stokes matrices}

Let us now describe the Stokes matrices of the system using the starting ray $\widetilde{R}= e^{i\theta}\R^+$, i.e. the sectors $\widetilde{S}_j$, which we will denote by $\widetilde{C}(j)$. For that purpose we need to find, for each $j$, a new fundamental matrix solution $\widetilde{W}_j$ on $\widetilde{S}_j$, which exhibits the correct order of flatness on $\widetilde{S}_{j-1}\cap \widetilde{S}_{j}$ and $\widetilde{S}_{j}\cap \widetilde{S}_{j+1}$. Then we will have
\begin{equation}\widetilde{C}_j= \widetilde{W}_j^{-1}\widetilde{W}_{j+1}.\label{obtaining_Stokes}\end{equation}

We claim that such a new fundamental matrix solution can be taken as \begin{equation}
\widetilde{W}_j=W_j\begin{pmatrix} 
I_{s_1}& 0 & 0 \\
0 & C(j)_{2,2} & 0 \\
0 & 0 & I_{s_3}
\end{pmatrix}P_{s_1,r_2,s_3}.\label{new_Stokes} 
\end{equation}
Without loss of generality we can suppose that $j=1$, the other cases being similar. In this case, we need only prove that
$\widetilde{W}_1$
is a fundamental matrix solution, the columns of which satisfy 
\begin{equation}\begin{cases}
\widetilde{w}_1\prec \dots \prec\widetilde{w}_n,
&\text{on}\:\:\widetilde{S}_{2k}\cap\widetilde{S}_{1},\\
\widetilde{w}_1\succ \dots\succ\widetilde{w}_n,
&\text{on}\:\:\widetilde{S}_{1}\cap\widetilde{S}_{2}.\end{cases} \label{right_order} \end{equation}
The proof uses the following facts:
\begin{itemize} 
\item We know that such a fundamental matrix solution exists. This comes from the sectorial normalization theorem (\cite{S}) for the system \eqref{equation} after a change $x\mapsto e^{-i\theta} x$.  
\item Moreover, we know that a matrix $\widetilde{W}_j$, the columns of which satisfy \eqref{right_order}, is unique up to right multiplication by a diagonal matrix.
\end{itemize} 
Hence, as soon as we show that the choice \eqref{new_Stokes} is the only possible choice (up to right multiplication by a diagonal matrix) that could meet the constraints \eqref{right_order}, then we are sure  that it indeed does satisfy the constraints. 

We discuss what occurs when we cross a separating ray. We say that we are \emph{before} (resp. \emph{after}) the separating ray $e^{i\phi}\mathbb R^+$ if we are in a region $\arg(x)<\phi$ (resp. $\arg(x)>\phi$). Also, note that each sector $S_j$ or $\widetilde{S}_j$ contains exactly one separating ray for each pair of eigenvalues. For instance, since $R_1$ is the separating ray inside $S_1$ and $\widetilde{S}_{2k}$ for any pair of eigenvalues within $\{\lambda_{s_1+1}, \dots, \lambda_{s_1+r_2}\}$, then  $e^{i\left(\phi_1+ \frac{(j-1)\pi}{k}\right)}\mathbb R^+=e^{\frac{i(j-1)\pi}{k}}R_1$ is a separating ray for the same pair of eigenvalues  inside $S_j$, and also inside the new sector $\widetilde{S}_{j-1}$. (This is a particular case of the general fact that if $R_p$ is some separating ray for some subset of eigenvalues inside a sector $S_\ell$, then $e^{\frac{is\pi}{k}}R_p$ is a separating ray for the same subset of eigenvalues inside $S_{\ell+s}$.)

It is straightforward that the solutions $\widetilde{w}_\ell=w_\ell$ for $\ell=1,..,s_1$, are adequate because $R_1$ is a separating ray only for pairs of eigenvalues among $\{\lambda_{s_1+1},\dots,\lambda_{s_1+r_2}\}$. Hence, $\widetilde{w}_1\prec \dots \prec\widetilde{w}_{s_1}$ on $\widetilde{S}_{2k}\cap\widetilde{S}_{1}$ since it is the case on $S_{2k}\cap S_1$. Also, in $\widetilde{S_1} \cap \widetilde{S_2}$, we have $\widetilde{w}_1\succ \dots\succ\widetilde{w}_{s_1}$ since we passed one separating ray for each pair of  eigenvalues among $\lambda_1, \dots, \lambda_{s_1}$.

Similarly, it is straightforward that the solutions $\widetilde{w}_\ell=w_\ell$ for indices $\ell=s_1+r_2+1,\dots,n$ are adequate.

Moreover, from \eqref{asympt_expansion} it is clear that on  $\widetilde{S}_{2k}\cap\widetilde{S}_{1}$, for $ s_1+1\leq j\leq s_1+r_2$,
\begin{equation}\widetilde{w}_1\prec\widetilde{w}_2 \prec \dots\prec\widetilde{w}_{s_1} \prec \widetilde{w}_{j}\prec \widetilde{w}_{s_1+r_2+1} \prec\dots\prec \widetilde{w}_n.\label{partial_asympt}\end{equation}
This comes from the fact that $\widetilde{S}_{2k}\cap\widetilde{S}_{1}\subset S_1$, from \eqref{asympt_expansion}, and from the fact that we have only crossed the separating ray $R_1$.  We have the asymptotic order reverse to \eqref{partial_asympt}  on $\widetilde{S_1} \cap \widetilde{S_2}$. Indeed,  $\widetilde{S_1} \cap \widetilde{S_2}\subset S_2$ is located after the separating ray $e^{i\frac{\pi}{k}}R_1$ (the second separating ray for the pairs of eigenvalues in the block), and before the second separating rays for the other pairs of eigenvalues.

It remains to compare the solutions $\widetilde{w}_\ell$ for $\ell\in\{s_1+1,\dots,s_1+r_2\}$.
If $C(1)=\left( c_{l,m}\right)_{l,m=1}^n$, then the linear combination
\begin{equation}
\overline{w}_\ell=w_{s_1+1}c_{s_1+1,\ell}+\dots+w_{s_1+r_2}c_{s_1+r_2,\ell}\label{def_w_ell}
\end{equation}
provides exactly vectors that have the same order of flatness with respect to $w_j$ for $j\notin\{s_1+1, \dots s_1+r_2\}$. We claim that they are ordered as: 
\begin{equation}\begin{cases}
\overline{w}_{s_1+1}\succ \dots \succ\overline{w}_{s_1+r_2},
&\text{on}\:\;\widetilde{S}_{2k}\cap\widetilde{S}_{1},\\
\overline{w}_{s_1+1}\prec \dots\prec\overline{w}_{s_1+r_2},
&\text{on}\:\;\widetilde{S}_{1}\cap\widetilde{S}_{2}.\end{cases} \label{claim}\end{equation}
Hence, reordering the vectors by letting $$
\widetilde{w}_{s_1+\ell} =\overline{w}_{s_1+r_2-\ell}$$ for $\ell\in\{1, \dots r_2\}$, which corresponds to applying the permutation matrix $P_{0,r_2,0}$ to the part of the fundamental matrix solution corresponding to $w_{s_1+1}, \dots, w_{s_1+r_2}$ (i.e. $P_{(s_1,r_2,s_3)}$ to the full $n\times n$ fundamental matrix solution), yields the theorem. 

Let us now prove the claim \eqref{claim}. The first part on $\widetilde{S}_{2k}\cap\widetilde{S}_{1}$ follows from  \eqref{asympt_expansion} and the fact that we are after $R_1$. To derive the second conclusion, let $\{\widehat{w}_1, \dots ,\widehat{w}_n\}$ be the basis given by the fundamental matrix solution $W_2$ on $S_2$. Then the order of flatness of $\overline{w}_{s_1+1}, \dots,\overline{w}_{s_1+r_2}$ on $\widetilde{S}_{1}\cap\widetilde{S}_{2}$ is the same as that of $\widehat{w}_{s_1+1}, \dots ,\widehat{w}_{s_1+r_2} $ on  $S_2\cap S_3$ because we passed $e^{\frac{\pi i}{k} }R_1$.
Indeed, the difference 
$\widehat{w}_\ell-\overline{w}_\ell$ is a linear combination of the $w_i$ for $i>s_1+r_2$: 
$$\widehat{w}_\ell-\overline{w}_\ell = \sum_{i>s_1+r_2}b_{i\ell} w_i,$$ since $C(1)$ is lower triangular.  From \eqref{asympt_expansion}, all these $w_i$ are flatter than $\overline{w}_\ell$ defined in \eqref{def_w_ell} after we have crossed a  separating ray associated to them, which is the case on $\widetilde{S}_{1}\cap\widetilde{S}_{2}$. Hence $w_i\prec\overline{w}_\ell$ on $\widetilde{S}_1\cap\widetilde{S}_2$. This yields $\widehat{w}_{s_1+1}\prec \dots\prec\widehat{w}_{s_1+r_2}$ on $\widetilde{S}_{1}\cap\widetilde{S}_{2}\subset S_2$ since we are in $S_2$ and after $e^{\frac{\pi i}{k} }R_1$. Hence $\overline{w}_{s_1+1}\prec \dots\prec\overline{w}_{s_1+r_2}$ on $\widetilde{S}_{1}\cap\widetilde{S}_{2}$.
 \hfill $\Box$

\end{document}